\date{\today}
\newtheorem{theorem}{Theorem}%[section]
\newtheorem{proposition}[theorem]{Proposition}
\newtheorem{corollary}[theorem]{Corollary}
\newtheorem{lemma}[theorem]{Lemma}
\theoremstyle{definition}
\newtheorem{example}[theorem]{Example}%[section]
\newtheorem{remark}[theorem]{Remark}%[section]
\begin{document}

\title[$H$-closed quasitopological groups]{$H$-closed quasitopological groups}

\author[S.~Bardyla]{Serhii~Bardyla}
\address{Faculty of Mathematics, National University of Lviv,
Universytetska 1, Lviv, 79000, Ukraine}
\email{sbardyla@yahoo.com}

\author[O.~Gutik]{Oleg~Gutik}
\address{Faculty of Mathematics, National University of Lviv,
Universytetska 1, Lviv, 79000, Ukraine}
\email{o\underline{\hskip5pt}\,gutik@franko.lviv.ua,
ovgutik@yahoo.com}

\author[A.~Ravsky]{Alex~Ravsky}
\address{Department of Functional Analysis, Pidstryhach Institute for Applied Problems of Mechanics and Mathematics
National Academy of Sciences of Ukraine, Naukova 2-b, Lviv, 79060, Ukraine}
\email{oravsky@mail.ru}

\keywords{Quasitopological group, topological group, semitopological group, paratopological group, fundamental filter, H-closed, Cauchy completty, Ra\v\i kov complete}

\subjclass[2010]{Primary 22A10, 54A20, 54H11. Secondary 22A15, 54D35}

\begin{abstract}
An $H$-closed quasitopological group is a Hausdorff
quasitopological group which is contained in each
Hausdorff quasitopological group as a closed subspace.
We obtained a sufficient condition for a quasitopological group
to be $H$-closed, which allowed us to
solve a problem by Arhangel'skii and Choban and to show that a topological group $G$ is
$H$-closed in the class of quasitopological groups if and only if $G$ is Ra\v\i kov-complete.
Also we present examples of non-compact quasitopological groups whose topological spaces are $H$-closed.
\end{abstract}

\maketitle

One of the functions of a theory is to establish a correspondence between outer relations of an
object with other objects and inner properties of the object. Now we proceed to concrete objects from
general topology and topological algebra.

Further we shall follow the terminology of \cite{ArhangelskiiTkachenko2008, BerglundJunghennMilnes1989, Engelking1989}.
In this paper term ``space'' means a Hausdorff topological space.

If $Y$ is a subspace of a topological space $X$ and $A\subseteq Y$, then by $\operatorname{cl}_Y(A)$ and $\operatorname{int}_Y(A)$ we denote closure and interior of $A$ in $Y$, respectively. By $\mathbb{R}$, $\mathbb{Q}$ and $\mathbb{N}$ we denote the set of real, rational, and positive integer numbers, respectively.

It is well known that a compact space $X$ is a closed subspace of any Hausdorff space which contains $X$.
So we define a Hausdorff space $X$ to be {\it $H$-closed} provided $X$ is a closed subspace of any Hausdorff  space which
contains $X$. So $H$-closedness is an outer relation of a space. But it turned to be
equivalent to an inner property of a space.

\begin{theorem}[{\cite[Exercise~3.12.5]{Engelking1989}, \cite{AlexandroffUrysohn1929}, (announcement in
\cite{AlexandroffUrysohn1923} and \cite{Aleksandrov1939}})]\label{HClSp}
For a Hausdorff space $X$ the following conditions are equivalent:
\begin{itemize}
  \item[(1)] the space $X$ is $H$-closed;
  \item[(2)] for every family $\left\{V_s\colon s\in S\right\}$ of open subsets of $X$ which has the finite intersection property the intersection $\bigcap\left\{\operatorname{cl}_X(V_s)\colon s\in S\right\}$ is non-empty;
  \item[(3)] every ultrafilter in the family of all open subsets of $X$ converges;
  \item[(4)] every open cover $\left\{U_s\colon s\in S\right\}$ of the space $X$ contains a finite subfamily $\left\{U_{s_1},U_{s_2},\ldots, U_{s_n}\right\}$ such that $\operatorname{cl}_X\left(U_{s_1}\right)\cup \operatorname{cl}_X\left(U_{s_2}\right)\cup\dots\cup\operatorname{cl}_X\left(U_{s_n}\right)=X$.
\end{itemize}
\end{theorem}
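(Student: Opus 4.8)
The plan is to prove the four conditions equivalent, most economically as the two-way equivalences $(2)\Leftrightarrow(3)$, $(2)\Leftrightarrow(4)$ and $(1)\Leftrightarrow(3)$, so that the purely ``inner'' conditions $(2)$, $(3)$, $(4)$ are linked combinatorially and only one bridge, $(1)\Leftrightarrow(3)$, crosses between an inner property and the outer one. Throughout, the key device is the notion of an \emph{open ultrafilter} on $X$: a maximal element, with respect to inclusion, of the set of all filters consisting of nonempty open subsets of $X$; equivalently, a filter $\mathcal{U}$ of nonempty open sets such that for every open $U\subseteq X$ either $U\in\mathcal{U}$, or $U\cap V=\emptyset$ for some $V\in\mathcal{U}$. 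By Zorn's lemma, every family of open sets with the finite intersection property is contained in an open ultrafilter. The preliminary lemma I would isolate is: for an open ultrafilter $\mathcal{U}$ and a point $x\in X$, one has $x\in\operatorname{cl}_X(V)$ for all $V\in\mathcal{U}$ if and only if every open neighbourhood of $x$ belongs to $\mathcal{U}$ (one direction is immediate from the filter property, the other from maximality). Call $\mathcal{U}$ \emph{convergent} to $x$ in the latter case.

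With this in hand, $(2)\Leftrightarrow(3)$ is short: a family $\{V_s\colon s\in S\}$ with the finite intersection property extends to an open ultrafilter $\mathcal{U}\supseteq\{V_s\colon s\in S\}$, and $\bigcap_{s}\operatorname{cl}_X(V_s)\neq\emptyset$ is, via the lemma, exactly the assertion that some open ultrafilter refining the family converges; conversely an open ultrafilter has the finite intersection property, so $(2)$ applies to it directly and the lemma supplies a limit. The equivalence $(2)\Leftrightarrow(4)$ is formal de Morgan duality: pass between an open cover $\{U_s\colon s\in S\}$ of $X$ and the family $\{X\setminus\operatorname{cl}_X(U_s)\colon s\in S\}$ of open sets, noting that $\bigcup_{s}U_s=X$ corresponds to $\bigcap_{s}\operatorname{cl}_X(X\setminus\operatorname{cl}_X(U_s))=\emptyset$, and that for open $W$ one has $\operatorname{cl}_X(X\setminus\operatorname{cl}_X(W))\subseteq X\setminus W$, which is precisely what is needed to translate ``no finite subfamily of the closures covers $X$'' into ``the dual family has the finite intersection property'' and back.

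The substantive bridge is $(1)\Leftrightarrow(3)$. For $(3)\Rightarrow(1)$: let $X$ be a subspace of a Hausdorff space $Y$ and $y\in\operatorname{cl}_Y(X)$; the trace on $X$ of the neighbourhood filter of $y$ is a filter of nonempty open subsets of $X$, which extends to an open ultrafilter $\mathcal{U}$, and by $(3)$ there is a point $x\in X$ to which $\mathcal{U}$ converges. If $x\neq y$, Hausdorffness of $Y$ provides disjoint open sets around $x$ and $y$ whose traces on $X$ both lie in $\mathcal{U}$ — impossible; hence $y=x\in X$, so $X$ is closed in $Y$. For $(1)\Rightarrow(3)$, which I expect to be the main obstacle since it requires manufacturing a witnessing space rather than merely pushing filters around, I would argue contrapositively: given a non-convergent open ultrafilter $\mathcal{U}$, set $Y=X\cup\{\infty\}$ and declare $O\subseteq Y$ open when $O\cap X$ is open in $X$ and, if $\infty\in O$, additionally $O\cap X\in\mathcal{U}$. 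One then checks that this is a topology (using that $\mathcal{U}$ is a filter), that $X$ is a subspace of $Y$, that $\infty\in\operatorname{cl}_Y(X)\setminus X$ so that $X$ is not closed, and — the only delicate point — that $Y$ is Hausdorff: separating $\infty$ from a point $x\in X$ is exactly where non-convergence of $\mathcal{U}$ to $x$ enters, via the maximality characterization, which yields $V\in\mathcal{U}$ disjoint from some open neighbourhood of $x$. This contradicts $(1)$ and closes the cycle.
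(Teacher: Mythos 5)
Your argument is correct. Note that the paper does not prove this theorem at all: it is quoted as a classical result with references to Engelking (Exercise 3.12.5) and Alexandroff--Urysohn, so there is no in-paper proof to compare against. What you give is essentially the standard textbook argument: the open-ultrafilter lemma linking $(2)$ and $(3)$, the de Morgan duality $(2)\Leftrightarrow(4)$ via the inclusion $\operatorname{cl}_X(X\setminus\operatorname{cl}_X(W))\subseteq X\setminus W$ for open $W$, and the one-point extension $Y=X\cup\{\infty\}$ with neighbourhoods of $\infty$ prescribed by a non-convergent open ultrafilter for $(1)\Rightarrow(3)$, where Hausdorffness of $Y$ comes exactly from the maximality characterization. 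All steps check out; the only loose phrase is the claim in $(2)\Leftrightarrow(3)$ that nonemptiness of $\bigcap_s\operatorname{cl}_X(V_s)$ is ``exactly'' convergence of some refining open ultrafilter (the converse implication of that slogan is not obvious, since a point in all $\operatorname{cl}_X(V_s)$ need not lie in closures of finite intersections), but this is harmless because the actual derivations you describe -- extend and apply the lemma for $(3)\Rightarrow(2)$, and apply $(2)$ directly to the ultrafilter itself for $(2)\Rightarrow(3)$ -- are sound.
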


A regular space is $H$-closed if and only if it is compact, but there exists a non-regular
$H$-closed space (see \cite{AlexandroffUrysohn1923}).

A \emph{semitopological group} consists of a group $G$ and a topology $\tau$ on the set $G$ such that the group operation $\cdot\colon G\times G\to G$ is separately continuous. A semitopological group with continuous inversion $\operatorname{\textbf{inv}}\colon G\rightarrow G\colon x\mapsto x^{-1}$ is called a \emph{quasitopological group}. Also, a semitopological group (resp., a quasitopological group) with continuous group operation is called a \emph{paratopological group} (resp., a \emph{topological group}).

We shall say that a semitopological group $G$ is \emph{$H$-closed} in a class of Hausdorff semitopological groups $\mathscr{S}$ if $G$ is a closed subgroup of every semitopological group $H\in\mathscr{S}$ which contains $G$ as a subgroup. A topological group which is $H$-closed in the class of all topological group is called \emph{absolutely closed}. A topological group $G$ is absolutely closed if and only if it
is {\it Ra\v\i kov-complete}, that is complete with respect to the two-sided uniformity which is defined as the least upper bound $\mathcal
L\vee\mathcal R$ of the left and the right uniformities on $G$ \cite{Raikov1946}. Recall
that the sets $\left\{(x,y)\colon x^{-1}y\in U\right\}$, where  $U$  runs  over  a
base  at  unit  of $G$, constitute a base of entourages for the
left uniformity $\mathcal L$ on $G$.  In  the  case  of  the  right
uniformity  $\mathcal R$,  the condition $x^{-1}y\in U$ is replaced by
$yx^{-1}\in U$. The {\it Ra\v\i kov completion $\hat G$\/} of a
topological group $G$ is the completion of $G$ with respect to the two-sided uniformity $\mathcal L\vee\mathcal R$. For every topological group
$G$ the space $\hat G$ has a natural structure of a topological
group. The group $\hat G$ can be defined as a unique (up to an
isomorphism) Ra\v\i kov complete group containing $G$ as a dense
subgroup. There is the following inner characterization of Ra\v\i kov completeness formulated by
the neighbourhood language: a topological group $G$ is Ra\v\i kov complete iff each fundamental
filter $\mathcal F$ on $G$ converges. We recall that a filter $\mathcal F$ on
a topological group $G$ is called {\it fundamental}, provided for each neighbourhood $U$ of the unit
$e$ of the group $G$ there exists a member $F\in\mathcal F$ such that $FF^{-1}\cup F^{-1}F\subset
U$~\cite{Weil-1938}.

Characterizations and investigation methods $H$-closed paratopological groups in the class of abelian paratopological groups turned out to be closely related with
that for minimal Abelian topological groups \cite{BanRavPre1}, \cite{BanRavPre2}, \cite{Ravsky2002} and \cite{Ravsky2003} (in particular, with
famous Prodanov and Stoyanov Theorem about precompactness of such groups). Following classics, Banakh
formulated the following conjecture:
{\it An Abelian topological group $G$ is $H$-closed in the class of paratopological groups if
and only if $G$ is Ra\v\i kov complete and $nG$ is precompact %that $\overline{nG}$ is compact. ??
for some natural $n$}, where $nG=\left\{na\colon a\in G\right\}$. The conjecture
is proved for some classes of abelian topological groups.
For instance, it holds for a group
$(G,\tau)$ for which there exists a $\sigma$-compact subgroup $L$ of $G$ such that
$G/L$ is periodic {\it and} there exists a group topology $\tau'\subset\tau$ such that
the Ra\v\i kov completion $\hat G$ of the group $(G,\tau')$ is Baire
(see \cite[Proposition~17]{Ravsky2003}). Also the conjecture is true if $(G,\tau)$ is countable, or divisible, or
characters of the group $(G,\tau)$ separate its points and $(G,\tau)$ is \v Cech complete or
periodic \cite{BanRav2001}.

Also, we remark that the problem of the closure of a group in topological and semitopological semigroups was discussed in the books \cite[Vol.~1]{CHK} and \cite{Ruppert1984}. Some new results on this topic can be found in \cite{Gutik-Pavlyk-2003} and \cite{Gutik-2014}.

In this paper we obtained a sufficient condition for a quasitopological group
to be $H$-closed, which allowed us to
solve a problem by Arhangel'skii and Choban and to show that a topological group $G$ is
$H$-closed in the class of quasitopological groups if and only if $G$ is Ra\v\i kov-complete.
We present examples of non-compact quasitopological groups whose topological spaces are $H$-closed.
Also we show that a regular semitopological group with a dense paratopological group is a paratopological group.

%%%%%%%%%%%%%%%%%%%%%%%%%%%%%%%%%%%%%%%%%%%%%%%%%%%%%

We need the following proposition from \cite{ArhangelskiiTkachenko2008}:

\begin{proposition}
[{\cite[Proposition~1.4.13]{ArhangelskiiTkachenko2008}}]\label{proposition-1}
Let $G$ be a quasitopological group and $H$ be a subgroup of $G$. Then closure $\operatorname{cl}_G(H)$ of $H$ in $G$ is a subgroup of $G$.
\end{proposition}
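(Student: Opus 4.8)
The plan is to verify directly that $C:=\operatorname{cl}_G(H)$ satisfies the three subgroup axioms, using two standard facts about semitopological groups: in any semitopological group every left translation $L_x\colon g\mapsto xg$ and every right translation $R_x\colon g\mapsto gx$ is a homeomorphism of $G$ (each is continuous by separate continuity and has a continuous inverse $L_{x^{-1}}$, resp.\ $R_{x^{-1}}$); and in a quasitopological group the inversion $\operatorname{\textbf{inv}}\colon G\to G$ is a homeomorphism, being a continuous involution. Since a homeomorphism maps the closure of a set onto the closure of its image, these will let us push membership in $C$ through the group operations.

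First, $e\in H\subseteq C$. For closure under inversion, I would apply $\operatorname{\textbf{inv}}$: since it is a homeomorphism, $C^{-1}=\operatorname{\textbf{inv}}(\operatorname{cl}_G(H))=\operatorname{cl}_G(\operatorname{\textbf{inv}}(H))=\operatorname{cl}_G(H^{-1})=\operatorname{cl}_G(H)=C$, where $H^{-1}=H$ because $H$ is a subgroup. So $C$ is closed under taking inverses.

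The only step with real content is closure under multiplication, and the subtlety is that the product on $G$ is merely separately continuous, so one cannot choose a single neighbourhood of the pair $(x,y)$; instead I would use a two-step approximation. Fix $x,y\in C$ and an arbitrary open neighbourhood $W$ of $xy$; it suffices to produce a point of $H$ in $W$. Since $L_x$ is a homeomorphism, $L_x^{-1}(W)$ is an open neighbourhood of $y$, and $y\in\operatorname{cl}_G(H)$ gives some $h\in H$ with $xh\in W$. Now $R_h$ is a homeomorphism, so $R_h^{-1}(W)$ is an open neighbourhood of $x$, and $x\in\operatorname{cl}_G(H)$ gives some $h'\in H$ with $h'h\in W$. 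As $H$ is a subgroup, $h'h\in H\cap W$, so $W\cap H\neq\emptyset$; since $W$ was arbitrary, $xy\in C$.

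The main obstacle is essentially only the temptation to invoke joint continuity of the multiplication, which is unavailable here; once one records that translations are homeomorphisms, the two-step neighbourhood argument above is forced, and no separation or completeness hypotheses beyond those in the statement are needed.
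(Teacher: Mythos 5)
Your proof is correct: the two-step argument (first approximate $y$ using the homeomorphism $L_x$, then approximate $x$ using $R_h$) correctly avoids any appeal to joint continuity, and continuity of inversion is used exactly where needed, for $C^{-1}=C$. The paper itself gives no proof, citing Proposition~1.4.13 of Arhangel'skii and Tkachenko, and your argument is essentially the standard one found there (equivalently phrased as $H\cdot\operatorname{cl}_G(H)\subseteq\operatorname{cl}_G(H)$ followed by $\operatorname{cl}_G(H)\cdot\operatorname{cl}_G(H)\subseteq\operatorname{cl}_G(H)$ via translations), so nothing further is needed.
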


We remark that the closure $\operatorname{cl}_G(H)$ of a subgroup $H$ in a paratopological group (and hence in a semitopological group) $G$ is not necessary a subgroup of $G$ (see: \cite[Example~1.4.17]{ArhangelskiiTkachenko2008}).

By Ellis' Theorem \cite{Ellis1957} every locally compact semitopological group is a topological group.

\begin{proposition}\label{lemma-2}
Every locally compact topological group $G$ is a closed subgroup in any quasitopological group which contains $G$ as a subgroup.
\end{proposition}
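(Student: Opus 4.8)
The plan is to prove that if the locally compact topological group $G$ is a (topological) subgroup of a Hausdorff quasitopological group $H$, then $\operatorname{cl}_H(G)=G$. Fix a point $h\in\operatorname{cl}_H(G)$; the strategy is to trap $h$ inside a compact subset of $G$, which is automatically closed in the Hausdorff space $H$. To this end, use local compactness of $G$ to choose an open neighbourhood $V$ of the identity $e$ in $G$ with $\operatorname{cl}_G(V)$ compact, and write $V=W\cap G$ for some open $W\subseteq H$. Using continuity of inversion in $H$ (this is the one place where the quasitopological, rather than merely semitopological, hypothesis enters), replace $W$ by $W\cap W^{-1}$ and $V$ by $V\cap V^{-1}$; the new $V$ still has compact closure in $G$, being closed in $\operatorname{cl}_G(V)$, and now $W=W^{-1}$.

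The core of the argument is then the following. Since left translations in $H$ are homeomorphisms, $hW$ is an open neighbourhood of $h$, so $h\in\operatorname{cl}_H(G)$ forces $hW\cap G\neq\varnothing$; pick $g\in G\cap hW$. Writing $g=hw$ with $w\in W$ and using $W=W^{-1}$ gives $h=gw^{-1}\in gW$. Now $gW$ is open in $H$ and, crucially because $g\in G$, one has $gW\cap G=g(W\cap G)=gV$. As $h$ lies in the open set $gW$ and in $\operatorname{cl}_H(G)$, every neighbourhood $N$ of $h$ satisfies $\varnothing\neq N\cap gW\cap G=N\cap gV$, hence $h\in\operatorname{cl}_H(gV)$. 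Finally $gV$ is open in $G$ with $\operatorname{cl}_G(gV)=g\,\operatorname{cl}_G(V)$ compact, hence closed in $H$; since $gV\subseteq\operatorname{cl}_G(gV)$ we get $\operatorname{cl}_H(gV)\subseteq\operatorname{cl}_G(gV)\subseteq G$, so $h\in G$.

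The argument is short and essentially the only real content is packaging local compactness correctly; I expect the main (mild) obstacle to be the step that turns the chosen $g\in hW$ into the relation $h\in gW$, so that the possibly ill-behaved translate $hV$ (which need not lie in $G$, as $h\notin G$ a priori) is replaced by the $G$-translate $gV$, whose closure in $G$ is compact and therefore stays inside $G$ — and it is precisely for this move that one needs the symmetric neighbourhood $W$, i.e. the continuity of inversion. One could equally well first pass to $\operatorname{cl}_H(G)$, which is a subgroup of $H$ by Proposition \ref{proposition-1}, and run the same computation with $G$ dense in $H$, but the direct version above does not even require that reduction.
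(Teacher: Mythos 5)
Your argument is correct, and it is a genuinely different route from the one in the paper. The paper's proof first passes to $\operatorname{cl}_H(G)$, invokes Proposition~\ref{proposition-1} (where continuity of inversion enters) to see that this closure is again a quasitopological group, quotes Engelking's Theorem~3.3.9 that a locally compact dense subspace of a Hausdorff space is open in it, and then writes $\operatorname{cl}_H(G)\setminus G$ as a union of open cosets $hG$, so that $G$ is open-and-closed in its closure. You instead argue pointwise and self-containedly: for $h\in\operatorname{cl}_H(G)$ you symmetrize the trace neighbourhood, $W=W^{-1}$ (this is where continuity of inversion in $H$ enters for you, replacing the use of Proposition~\ref{proposition-1}), use $g\in hW\cap G$ to flip to $h\in gW$, identify $gW\cap G=gV$, and trap $h$ in the compact set $g\operatorname{cl}_G(V)\subseteq G$, which is closed in the Hausdorff space $H$. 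All the individual steps check out ($W^{-1}\cap G=V^{-1}$ since $G$ is a subgroup, translations in a semitopological group are homeomorphisms, $\operatorname{cl}_G(gV)=g\operatorname{cl}_G(V)$ is compact, and compactness is absolute, so this set is closed in $H$). What each approach buys: the paper's version is shorter modulo the two cited results and displays the coset-decomposition idea; yours avoids both citations — in effect it reproves the needed special case of the ``locally compact dense implies open'' theorem — and it does not even require the preliminary reduction to the dense situation.
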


\begin{proof}
Suppose that $G$ is a locally compact subgroup of a quasitopological group $H$. Then by Theorem~3.3.9 from \cite{Engelking1989}, $G$ is an open subset of the closure $\operatorname{cl}_H(G)$. Now, Proposition~\ref{proposition-1} implies that the closure $\operatorname{cl}_H(G)$ is a subgroup of the quasitopological group $H$, and hence $\operatorname{cl}_H(G)$ is a quasitopological group. Since every left or right translation in the quasitopological group $\operatorname{cl}_H(G)$ is a homeomorphism of the topological space $\operatorname{cl}_H(G)$ we get that $hG$ is an open subset of $\operatorname{cl}_H(G)$ for each element $h$ in $G$. This implies that $\operatorname{cl}_H(G)\setminus G=\bigcup_{h\in \operatorname{cl}_{H}(G)\setminus G}hG$ is an open subset of the quasitopological group $H$, and hence $G$ is a closed subset of the space  $\operatorname{cl}_H(G)$.
\end{proof}

\begin{remark}
We observe that a counterpart of Proposition~\ref{lemma-2} does not hold for paratopological groups,
because
there exists a Hausdorff paratopological group which contains the discrete group of
integers as a non-closed subgroup (see~\cite[Lemma 3 or Proposition 5]{Ravsky2003}).
\end{remark}

\begin{proposition}\label{proposition-4a}
Every regular semitopological group which contains a dense paratopological group is a paratopological group.
\end{proposition}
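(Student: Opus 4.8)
My plan is to show that the multiplication $m\colon G\times G\to G$ of the semitopological group $G$ is continuous at the unit $(e,e)$; this is enough, because in any semitopological group the left and right translations $\lambda_a\colon x\mapsto ax$ and $\rho_b\colon x\mapsto xb$ are homeomorphisms and $m(ax,yb)=a(xy)b$, so that $m\circ(\lambda_a\times\rho_b)=(\lambda_a\circ\rho_b)\circ m$ and hence continuity of $m$ at $(e,e)$ propagates to continuity at every point $(a,b)$. Throughout, $H$ denotes the given dense subgroup of $G$, which is a paratopological group in the subspace topology.

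Next I would fix a neighbourhood $W$ of $e$ in $G$ and, using regularity, choose an open neighbourhood $W_1$ of $e$ with $\operatorname{cl}_G(W_1)\subseteq W$. Since $W_1\cap H$ is a neighbourhood of $e$ in $H$ and the multiplication on $H$ is jointly continuous at $(e,e)$, I obtain open neighbourhoods $U$ and $V$ of $e$ in $G$ with $(U\cap H)(V\cap H)\subseteq W_1$. The goal then becomes the inclusion $UV\subseteq\operatorname{cl}_G(W_1)$, which immediately yields $UV\subseteq W$ and finishes the argument.

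To establish that inclusion — the heart of the proof — I would take $x\in U$, $y\in V$, an arbitrary open neighbourhood $O$ of $xy$, and produce a point of $O\cap W_1$. First, separate continuity of $m$ (continuity of $\rho_y$ at $x$) gives an open neighbourhood $N_x\subseteq U$ of $x$ with $N_xy\subseteq O$; density of $H$ then supplies a point $x_0\in N_x\cap H\subseteq U\cap H$. Second, separate continuity of $m$ (now continuity of $\lambda_{x_0}$ at $y$) gives an open neighbourhood $N_y\subseteq V$ of $y$ with $x_0N_y\subseteq O$; density supplies $y_0\in N_y\cap H\subseteq V\cap H$. Then $x_0y_0\in O$ and simultaneously $x_0y_0\in(U\cap H)(V\cap H)\subseteq W_1$, so $O\cap W_1\neq\varnothing$, whence $xy\in\operatorname{cl}_G(W_1)$, as required.

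I expect the only real subtlety to be bookkeeping rather than a genuine obstacle: the two applications of separate continuity must be interleaved in the correct order — fix $y$ to push $x$ into $H$, then fix the resulting point $x_0\in H$ to push $y$ into $H$ — and it is regularity, not merely the Hausdorff property, that is used to upgrade $(U\cap H)(V\cap H)\subseteq W_1$ to $UV\subseteq W$.
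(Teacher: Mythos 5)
Your proof is correct and takes essentially the same route as the paper's: regularity gives a closed neighbourhood buffer, joint continuity on the dense subgroup gives the product inclusion there, and separate continuity plus density transfers it to the whole group. The only difference is cosmetic: the paper performs this transfer via the closure lemmas $f(\operatorname{cl}(A))\subseteq\operatorname{cl}(f(A))$ and $\operatorname{cl}(V\cap H)=\operatorname{cl}(V)$ for open $V$ and dense $H$, while you unpack the same mechanism into an explicit pointwise approximation with $x_0,y_0$.
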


\begin{proof}
Suppose that a paratopological group $G$ is a dense subgroup of a regular semitopological group $H$ and $W$ is an open neighbourhood of the unit $e$ of the group $H$. Then the regularity of the space $H$ implies that there exists open neighbourhood $U$ of the unit $e$ in the space $H$ such that $\operatorname{cl}_H(U)\subseteq W$. Since $G$ is a dense paratopological subgroup of $H$, there exists an open neighbourhood $V$ of the unit $e$ in the space $H$ such that
\begin{equation*}
    (V\cap G)\cdot(V\cap G)\subseteq U\cap G.
\end{equation*}
Now the separate continuity of the group operation in $H$ and Theorem~1.4.1 from \cite{Engelking1989} imply that $x\cdot\operatorname{cl}_H(V\cap G)\subseteq \operatorname{cl}_H(U\cap G)$ for every $x\in V\cap G$, so
\begin{equation}\label{eq-1}
    (V\cap G)\cdot\operatorname{cl}_H(V\cap G)\subseteq \operatorname{cl}_H(U\cap G).
\end{equation}
Next, if we apply similar arguments to formula (\ref{eq-1}) then we obtain
\begin{equation*}
    \operatorname{cl}_H(V\cap G)\cdot\operatorname{cl}_H(V\cap G)\subseteq \operatorname{cl}_H\left(\operatorname{cl}_H(U\cap G)\right)=\operatorname{cl}_H(U\cap G).
\end{equation*}
Then Theorem~1.3.6 of \cite{Engelking1989} implies that $\operatorname{cl}_H(V\cap G)=\operatorname{cl}_H(V)$ and so
\begin{equation*}
    V\cdot V\subseteq \operatorname{cl}_H(V)\cdot\operatorname{cl}_H(V)\subseteq \operatorname{cl}_H(U)\subseteq W.
\end{equation*}
This implies the continuity of the group operation in $H$.
\end{proof}

Propositions~\ref{proposition-4a} and ~\ref{proposition-1} imply

\begin{theorem}\label{theorem-6}
A topological group $G$ is $H$-closed in the class of regular quasitopological groups if and only if $G$ is absolutely closed.
\end{theorem}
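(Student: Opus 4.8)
The plan is to unwind the two directions using the structural results already available. Recall that "$G$ is absolutely closed" means, by Raikov's theorem, that $G$ is Raikov-complete, equivalently that $G$ is a closed subgroup of every topological group containing it.

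\smallskip

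\noindent\textbf{Necessity.} Suppose $G$ is $H$-closed in the class of regular quasitopological groups. Every topological group is in particular a regular quasitopological group (topological groups are regular, and are quasitopological). Hence if $H$ is any topological group containing $G$ as a subgroup, then $H$ is a regular quasitopological group containing $G$, so by hypothesis $G$ is closed in $H$. Thus $G$ is $H$-closed in the class of topological groups, i.e. absolutely closed.

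\smallskip

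\noindent\textbf{Sufficiency.} Suppose $G$ is absolutely closed, hence Raikov-complete, and let $H$ be a regular quasitopological group containing $G$ as a subgroup. By Proposition~\ref{proposition-1}, $K:=\operatorname{cl}_H(G)$ is a subgroup of $H$, and being a subspace of a regular space it is a regular quasitopological group in which $G$ is a \emph{dense} subgroup. Since $G$ is a topological group, Proposition~\ref{proposition-4a} applies to $K$ (a regular semitopological group with the dense paratopological group $G$), so $K$ is a paratopological group; applying the same argument to the inversion, or simply noting that $K$ is a quasitopological group with continuous multiplication, $K$ is in fact a topological group. Now $G$ is a dense subgroup of the topological group $K$; since $G$ is Raikov-complete it is closed in $K$, and density forces $G=K=\operatorname{cl}_H(G)$. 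Therefore $G$ is closed in $H$, as required.

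\smallskip

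\noindent The step I expect to require the most care is the passage through Proposition~\ref{proposition-4a}: one must check that $K=\operatorname{cl}_H(G)$ is genuinely regular (inherited from $H$ as a subspace) and that $G$ sits inside it as a dense \emph{paratopological} subgroup so that the proposition is applicable, and then observe that a paratopological group which is also a quasitopological group (continuous inversion inherited from $H$) is a topological group. Everything else is a direct invocation of the cited results.
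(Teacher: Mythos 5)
Your proof is correct and is exactly the argument the paper intends: the paper derives Theorem~\ref{theorem-6} directly from Propositions~\ref{proposition-4a} and~\ref{proposition-1}, and your sufficiency step (pass to $\operatorname{cl}_H(G)$, which is a regular quasitopological group with $G$ dense and paratopological, hence a topological group, in which the Ra\v\i kov-complete $G$ must be closed and so equal to the closure) is precisely that unwinding. The necessity direction is the same triviality in both cases.
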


The following example shows a counterpart of Proposition~\ref{proposition-4a} does not hold for non-regular Hausdorff quasitopological groups.

\begin{example}\label{example-7}
Let $\mathbb{R}^2$ be the direct square of the additive group of real numbers.
Let $\alpha$ be an arbitrary irrational number. Then the line $L(\alpha)=\left\{(x,y)\in\mathbb{R}^2\colon y=\alpha x\right\}$ has the unique point $(0,0)$ with two rational coordinates. Also this property has the line $L(-\alpha)$ in $\mathbb{R}^2$. Put $L=(L(\alpha)\cup L(-\alpha))\setminus\{(0,0)\}$.

We define a topology $\tau$ on the group $\mathbb{R}^2$ in the following way. The family
\begin{equation*}
    \mathscr{B}=\left\{U_{\varepsilon}=B_{\varepsilon}\setminus L\colon \varepsilon>0\right\},
\end{equation*}
where $B_{\varepsilon}=\left\{(x,y)\in \mathbb{R}^2\colon x^2+y^2<\varepsilon^2\right\}$ is a usual $\varepsilon$-ball in $\mathbb{R}^2$, is the base of the topology $\tau$ at the neutral element $(0,0)$ of $\mathbb{R}^2$.

It is easy to check that:
\begin{itemize}
  \item[$(a)$] $(\mathbb{R}^2,\tau)$ with coordinate addition is a quasitopological group;
  \item[$(b)$] the induced topology from $(\mathbb{R}^2,\tau)$ onto $\mathbb{Q}^2$ coincides with the usual topology on $\mathbb{Q}^2$, and hence $\mathbb{Q}^2$ with the coordinate addition is a topological subgroup of $(\mathbb{R}^2,\tau)$;
  \item[$(c)$] $(\mathbb{R}^2,\tau)$ is not a regular topological space;
  \item[$(d)$] $(\mathbb{R}^2,\tau)$ is a functionally Hausdorff space (a topological space $X$ is called \emph{functionally Hausdorff } if for arbitrary distinct point $x$ and $y$ in $X$ there exists a continuous map $f\colon X\to [0,1]$ such that $f(x)=0$  and $f(y)=1$) and hence is Hausdorff.
\end{itemize}
\end{example}

No we are going to formulate a sufficient condition for a quasitopological group
to be $H$-closed in the class of quasitopological groups. We shall call a quasitopological group $G$
{\it Cauchy completty} if each Cauchy filter $\mathcal F$  on $G$ with an base consisting of its
open subsets, has a limit point.
A filter $\mathcal F$ on a quasitopological group $G$ we shall call {\it Cauchy filter},
provided for each neighbourhood $U$ of the unit $e$ of the group $G$ there exists a
member $F\in\mathcal F$ such that $yU\cap Uy\in\mathcal F$ for each point $y\in F$ \cite{Batikova2009}.
Theorem~\ref{HClSp} implies that if $G$ is a quasitopological group which is an
$H$-closed space, then $G$ is Cauchy completty. From the other side we have a following

\begin{proposition}\label{occ} Every Cauchy completty quasitopological group $G$ is $H$-closed in the class of  quasitopological groups.
\end{proposition}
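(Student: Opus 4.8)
The plan is to argue by contradiction: suppose $G$ is a Cauchy completty quasitopological group which sits as a non-closed subgroup in some Hausdorff quasitopological group $H$. First I would replace $H$ by $\operatorname{cl}_H(G)$, which by Proposition~\ref{proposition-1} is again a quasitopological group containing $G$ as a dense, non-closed subgroup; so I may assume $G$ is dense in $H$ and pick a point $h\in H\setminus G$. The goal is to manufacture from $h$ a Cauchy filter on $G$ whose base consists of open subsets of $G$ and which has no limit point in $G$, contradicting Cauchy complettyness. The natural candidate is the trace on $G$ of the neighbourhood filter of $h$ in $H$: for each open neighbourhood $O$ of $h$ in $H$, the set $O\cap G$ is nonempty (density) and open in $G$, and these sets form a filter base $\mathcal{F}$ on $G$. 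Since $H$ is Hausdorff and $h\notin G$, this filter cannot converge in $G$, and in fact cannot have a limit point in $G$: if $g\in G$ were a limit point, then every neighbourhood of $g$ in $H$ would meet every $O\cap G$, forcing $g$ into the closure of $\{h\}$, impossible in a Hausdorff space with $g\neq h$.

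The substantive step is to verify that $\mathcal{F}$ is a Cauchy filter in the sense of the definition given before the proposition, i.e.\ that for every neighbourhood $U$ of the unit $e$ in $G$ there is $F\in\mathcal{F}$ such that $yU\cap Uy\in\mathcal{F}$ for every $y\in F$. Here $U = U'\cap G$ for some open neighbourhood $U'$ of $e$ in $H$. Using continuity of inversion and separate continuity of multiplication in $H$, together with regularity-free Hausdorff topological manipulations (Theorem~1.4.1 of \cite{Engelking1989}, as already used in the proof of Proposition~\ref{proposition-4a}), I would choose an open neighbourhood $V'$ of $e$ in $H$ with $\operatorname{cl}_H(V')\cdot\operatorname{cl}_H(V')^{-1}\subseteq U'$ and likewise on the other side, shrinking as needed so that a single $V'$ works for both $x^{-1}x'$-type and $x'x^{-1}$-type conditions. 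Then set $F = (hV')\cap G \in \mathcal{F}$. For $y\in F$ we have $y\in hV'$, so $h\in yV'^{-1}$; I want to show $yU\cap Uy \supseteq (hW')\cap G$ for some fixed open neighbourhood $W'$ of $e$ in $H$ independent of $y$, which would put $yU\cap Uy$ in $\mathcal{F}$. The inclusion $yU\supseteq (hW')\cap G$ will follow because for $z\in hW'$, writing $z = hw$ and $h = yv^{-1}$ with $v\in V'$, $w\in W'$, we get $z = y\,v^{-1}w \in y(V'^{-1}W')$, and choosing $W'$ with $V'^{-1}W'\subseteq U'$ gives $z\in yU'$; intersecting with $G$ and using $y\in G$, $z\in G$ yields $z\in yU$. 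The symmetric computation handles $Uy$, and taking $W'$ small enough to satisfy both constraints finishes the Cauchy verification.

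The main obstacle I anticipate is the bookkeeping in the last step: controlling both $yU$ and $Uy$ simultaneously with a single neighbourhood, and being careful that all the products $V'^{-1}W'$, $W'V'^{-1}$, etc., can be forced into $U'$ by continuity in $H$ — separate continuity alone suffices to translate open sets, but one must be disciplined about which variable is being moved at each stage, exactly as in the proof of Proposition~\ref{proposition-4a}. A secondary subtlety is ensuring that the traces $O\cap G$ really do form a filter base generating a genuine filter on $G$ (nonempty, closed under finite intersections up to refinement), which is immediate from density of $G$ and the fact that neighbourhood filters are filter bases. Once the Cauchy property is established, the contradiction is automatic: $\mathcal{F}$ is a Cauchy filter on $G$ with an open base and no limit point, contradicting the hypothesis that $G$ is Cauchy completty. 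Hence $G$ is closed in every Hausdorff quasitopological group containing it, i.e.\ $G$ is $H$-closed in the class of quasitopological groups.
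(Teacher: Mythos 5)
Your overall architecture coincides with the paper's: pass to $\operatorname{cl}_H(G)$ via Proposition~\ref{proposition-1}, take the trace on $G$ of the neighbourhood filter of a point $x\in H\setminus G$, show it is a Cauchy filter with open base, and contradict Cauchy complettyness (the non-existence of a limit point in $G$ is fine, although the justification should be that distinct points of the Hausdorff space $H$ have disjoint neighbourhoods, not that $g$ lands in the closure of $\{h\}$). However, the substantive step --- the Cauchy verification --- has a genuine gap. You propose to choose $V'$ with $\operatorname{cl}_H(V')\cdot\operatorname{cl}_H(V')^{-1}\subseteq U'$ and then $W'$ with $V'^{-1}W'\subseteq U'$. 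Producing a neighbourhood of the identity whose product with (the inverse of) another neighbourhood lies inside a prescribed neighbourhood is exactly joint continuity of multiplication at $(e,e)$, which a quasitopological group does not have: multiplication in $H$ is only separately continuous. Example~\ref{example-7} of the paper shows this concretely: in $(\mathbb{R}^2,\tau)$, for $U=B_1\setminus L$ no neighbourhood $V$ of $(0,0)$ satisfies $V-V\subseteq U$, since differences of points off $L$ hit $L$ arbitrarily close to the origin. So the inclusions $V'^{-1}W'\subseteq U'$ (and the closure version) cannot be arranged in general, and your argument that $yU\cap Uy$ contains a fixed trace neighbourhood $(hW')\cap G$ collapses.

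The way around this, which is the paper's key trick, avoids all products of neighbourhoods: choose a \emph{symmetric} open neighbourhood $V=V^{-1}$ of $e$ in $H$ with $V\cap G\subseteq U$ (only continuity of inversion is needed for symmetry), and take $F=xV\cap Vx\cap G\in\mathcal F$. For $y\in F$, symmetry gives $x\in yV$ and $x\in Vy$, so $yV\cap Vy$ is itself an open neighbourhood of $x$ in $H$; hence its trace $yV\cap Vy\cap G$ belongs to $\mathcal F$ by the very definition of $\mathcal F$, with no continuity estimate at all. Finally, since $y\in G$, one has $yV\cap G=y(V\cap G)\subseteq yU$ and $Vy\cap G=(V\cap G)y\subseteq Uy$, so $yU\cap Uy\supseteq yV\cap Vy\cap G$ lies in $\mathcal F$ by upward closure. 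Note also that your choice $F=(hV')\cap G$ only controls the left side; the two-sided set $xV\cap Vx\cap G$ is what makes both $yU$ and $Uy$ work simultaneously.
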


\begin{proof}Suppose to the contrary that there exists a Cauchy completty quasitopological group $G$
which is not $H$-closed in the class of quasitopological groups. By
Proposition~\ref{proposition-1},
without loss of generality we can assume that there exists a quasitopological group $H$
which contains  the group $G$ as a dense proper subgroup. Fix an arbitrary
$x\in H\setminus G$. Put
\begin{equation*}
\mathcal F=\{V_x\cap G\colon V_x\subset H \hbox{~is a neighbourhood of the point~} x \hbox{~in~} H\}.
\end{equation*}
The family $\mathcal F$ is a filter on $G$ with a base
\begin{equation*}
\{V_x\cap G\colon V_x\subset H \hbox{~is an open in~} H \hbox{~ neighbourhood of the point~} x\}
\end{equation*}
which consists of open subsets of the space $G$. We claim that $\mathcal F$ is a Cauchy filter.
Indeed, let
$U$ be an arbitrary open neighbourhood of the unit $e$ of the group $G$.
There exists a symmetric open neighbourhood $V=V^{-1}$ of the unit $e$ of the group $H$ such
that $V\cap G\subset U$.
A set $F=xV\cap Vx\cap G$ is an element of the filter $\mathcal F$. Let $y$ be an arbitrary
point of $F$. Since $y\in xV$ we have that $x\in yV^{-1}=yV$. Similarly, the inclusion
$y\in Vx$ implies that $x\in V^{-1}y=Vy$. So a set $yV\cap Vy$ is an open
neighbourhood of the point $x$ in the space $H$. Then a set $yV\cap Vy\cap G$ is an element of the filter $\mathcal
F$. Since $y\in G$ we conclude that $yV\cap G\subset yU$ and $Vy\cap G\subset Uy$. Therefore
$yV\cap Vy\cap G\subset yU\cap Uy\in\mathcal F$. Hence $\mathcal F$ is a Cauchy filter.
Since the group $G$ is Cauchy completty, the filter $\mathcal F$ has a limit point
$x'\in G$. Since $x\not\in G$ we get that $x\ne x'$. The Hausdorffness of the space of the quasitopological group $H$ implies that
there exists an open neighbourhood $V_x$ of the point $x$ in $H$ such that
$x'\not\in\operatorname{cl}_H(V_x)$. Hence $x'\not\in\operatorname{cl}_G(V_x\cap G)$  which contradicts the following conditions:
$V_x\cap G\in\mathcal F$ and $x'$ is a limit point of filter $\mathcal F$. The obtained
contradiction implies that the quasitopological group $G$ is $H$-closed in the class of quasitopological groups.
\end{proof}

Arhangel'skii and Choban in~\cite{AC} proved that each \v Cech-complete quasitopological group is
$H$-closed in the class of quasitopological groups.
Since each \v Cech-complete semitopological group is a topological group
(see~\cite{Bou} or~\cite{Rez}),
and each \v Cech-complete topological group is Ra\v\i kov complete
(see~\cite[Theorem 4.3.7]{ArhangelskiiTkachenko2008})
this suggested Arhangel'skii and Choban to pose a problem about $H$-closedness of
Ra\v\i kov complete topological group in the class of quasitopological groups.
The next corollary is a positive answer to this problem.

\begin{corollary} A topological group $G$ is $H$-closed in the class of quasitopological groups if
and only if $G$ is Ra\v\i kov complete.
\end{corollary}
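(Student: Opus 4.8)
The plan is to derive this corollary directly from Proposition~\ref{occ} together with the known characterization of Ra\u\i kov completeness in terms of fundamental filters quoted in the introduction. The key observation is that, for a \emph{topological} group, the Cauchy filters of Batikov\'a (those relevant to the quasitopological notion of completty) and the fundamental filters of Weil should generate essentially the same convergence requirement, so ``Cauchy completty'' collapses to ``Ra\u\i kov complete.''

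First I would prove the sufficiency direction: if $G$ is Ra\u\i kov complete, then every fundamental filter on $G$ converges; I want to upgrade this to show $G$ is Cauchy completty, i.e.\ every Cauchy filter with a base of open sets has a limit point, and then invoke Proposition~\ref{occ}. So, given a Cauchy filter $\mathcal F$ on $G$, for each neighbourhood $U$ of $e$ pick $F\in\mathcal F$ with $yU\cap Uy\in\mathcal F$ for all $y\in F$. Fixing one such $y_0\in F$ and shrinking, one gets a member $F'=y_0U\cap Uy_0\cap F\in\mathcal F$; for $y,z\in F'$ one has $y,z\in y_0U$ so $y^{-1}z\in U^{-1}U$, and $y,z\in Uy_0$ so $zy^{-1}\in UU^{-1}$. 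Choosing $U$ symmetric with $U^2\subseteq W$ shows $F'(F')^{-1}\cup (F')^{-1}F'\subseteq W$, so $\mathcal F$ is a fundamental filter in Weil\'s sense. By Ra\u\i kov completeness $\mathcal F$ converges, hence has a limit point, so $G$ is Cauchy completty and Proposition~\ref{occ} gives that $G$ is $H$-closed in the class of quasitopological groups.

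For the converse direction, suppose $G$ is $H$-closed in the class of quasitopological groups. Every topological group is a quasitopological group, so in particular $G$ is closed in its Ra\u\i kov completion $\hat G$, which is a topological group (hence quasitopological) containing $G$ as a \emph{dense} subgroup; density forces $G=\hat G$, so $G$ is Ra\u\i kov complete. This half is essentially immediate from the definitions in the introduction.

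The main obstacle is the bookkeeping in the sufficiency half: one must be careful that the passage from ``$\mathcal F$ Cauchy'' to ``$\mathcal F$ fundamental'' is legitimate, in particular that one may pass to a single member $F'$ of $\mathcal F$ on which \emph{both} the left and right translate conditions hold simultaneously, and that the resulting set products land inside a prescribed neighbourhood of $e$ after the standard $U^2\subseteq W$ trick. A secondary point worth a remark is that in a topological group a limit point of a filter is automatically a limit, so ``has a limit point'' and ``converges'' coincide here; this is why the weaker completty hypothesis suffices. No genuinely new idea beyond Proposition~\ref{occ} and the Weil characterization is needed.
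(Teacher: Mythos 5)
Your proof is correct and takes essentially the same route as the paper: you reduce to Proposition~\ref{occ} by showing that every Cauchy filter on a topological group is fundamental (the paper runs the same computation with a neighbourhood $V$ satisfying $VV^{-1}\cup V^{-1}V\subseteq U$, you with a symmetric $U$ satisfying $U^{2}\subseteq W$; both are legitimate) and then invoke the fundamental-filter characterization of Ra\v{\i}kov completeness. The converse, which the paper dismisses as trivial, is exactly your closedness-in-$\hat G$ argument, so there is nothing to add.
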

\begin{proof} The implication $(\Rightarrow)$ is trivial.

$(\Leftarrow)$ Suppose that $G$ is a Ra\v\i kov complete
topological group. By Proposition~\ref{occ} it suffices to show that the topological group $G$ is
Cauchy completty. For this purpose it suffices to show that each Cauchy filter $\mathcal F$
on the group $G$ is fundamental. Let $U$ be an arbitrary neighbourhood of the unit $e$ of the group $G$.
Since $G$ is a topological group, there exists a neighbourhood $V$ of the unit $e$ of the group $G$
such that $VV^{-1}\cup V^{-1}V\subset U$. Since $\mathcal F$ is a Cauchy filter,
there exists a member $F$ of the filter $F\in\mathcal F$ such that
such that $yV\cap Vy\in\mathcal F$ for each point $y\in F$. Fix an arbitrary point $y\in F$.
Then we get
\begin{equation*}
(yV\cap Vy)(yV\cap Vy)^{-1}\cup (yV\cap Vy)^{-1}(yV\cap Vy)\subseteq
(Vy)(Vy)^{-1}\cup (yV)^{-1}(yV)\subseteq VV^{-1}\cup V^{-1}V\subseteq U,
\end{equation*}
and hence the filter $\mathcal F$ is fundamental.
\end{proof}

\begin{remark}
We observe that in the paper of Arhangel'skii and Choban \cite{AC} there proved  that a \v{C}ech-complete quasitopological group is $H$-closed. It is obvious that our Proposition~\ref{lemma-2} is a simple consequence of this fact. We present this proposition because it has a short proof with the nice idea.
\end{remark}

A family $\mathscr{B}$ of open subsets of a topological space $X$ is called $\pi$-base of $X$ if for every open subset $G$ of $X$ there exists $B\in \mathscr{B}$ such that $B\subseteq G$ \cite{Juhasz-1980}.

\begin{proposition} Let $(G,\tau)$  be a Cauchy completty quasitopological group, $(G,\sigma)$
be a quasitopological group, $\sigma\supset\tau$, and the space $(G,\sigma)$
has a $\pi$-base, consisting of open subsets of the space $(G,\tau)$. Then
$(G,\sigma)$ is a Cauchy completty quasitopological group.
\end{proposition}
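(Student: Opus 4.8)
The plan is to deduce the Cauchy completty of $(G,\sigma)$ from that of $(G,\tau)$ by \emph{lowering} a given Cauchy filter to the coarser topology $\tau$, finding a limit point there, and then \emph{lifting} that limit point back to $\sigma$. Fix a $\pi$-base $\mathscr B$ of the space $(G,\sigma)$ consisting of $\tau$-open sets, and let $\mathcal F$ be a Cauchy filter on $(G,\sigma)$ with a base $\mathcal B$ of $\sigma$-open sets; we must produce a limit point of $\mathcal F$ in $(G,\sigma)$. Two preliminary remarks: since $\tau\subseteq\sigma$, every neighbourhood of the unit in $(G,\tau)$ is a neighbourhood of the unit in $(G,\sigma)$, so $\mathcal F$ is also a Cauchy filter on $(G,\tau)$; and for each $F\in\mathcal B$ the set $\operatorname{int}_\tau(F)$ is non-empty, since the non-empty $\sigma$-open set $F$ contains a member of $\mathscr B$.

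First I would replace $\mathcal F$ by the filter $\mathcal F_\tau$ generated by $\{\operatorname{int}_\tau(F)\colon F\in\mathcal B\}$; because $\operatorname{int}_\tau(F_1\cap F_2)\subseteq\operatorname{int}_\tau(F_1)\cap\operatorname{int}_\tau(F_2)$ and $F_1\cap F_2\in\mathcal B$, this family really is a filter base, and it consists of $\tau$-open sets. As $\operatorname{int}_\tau(F)\subseteq F$, the filter $\mathcal F_\tau$ refines $\mathcal F$, so $\mathcal F_\tau$ is still a Cauchy filter on $(G,\tau)$ and has a $\tau$-open base. Cauchy completty of $(G,\tau)$ then supplies a point $x_0\in G$ which is a limit point of $\mathcal F_\tau$ in $\tau$, hence also a limit point of the coarser filter $\mathcal F$ in $\tau$; that is, $x_0\in\operatorname{cl}_\tau(M)$ for every $M\in\mathcal F$.

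The main work is to upgrade this to $x_0\in\operatorname{cl}_\sigma(F)$ for every $F\in\mathcal B$. So fix $F\in\mathcal B$ and a $\sigma$-open neighbourhood $W$ of $x_0$; I want $W\cap F\neq\emptyset$. Choose a symmetric $\sigma$-open neighbourhood $V=V^{-1}$ of the unit with $x_0V\subseteq W$, and, using that $\mathcal F$ is Cauchy on $(G,\sigma)$, choose $F_1\in\mathcal B$ with $F_1\subseteq F$ and $yV\cap Vy\in\mathcal F$ for all $y\in F_1$. The crucial point is that then $F_1\subseteq x_0\operatorname{cl}_\tau(V)$: for $y\in F_1$ we have $yV\in\mathcal F$, so $x_0\in\operatorname{cl}_\tau(yV)=y\operatorname{cl}_\tau(V)$ (left translations being $\tau$-homeomorphisms), and hence $x_0^{-1}y\in\operatorname{cl}_\tau(V)$ by symmetry of $V$. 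Therefore $\operatorname{int}_\tau(F_1)\subseteq F_1\subseteq x_0\operatorname{cl}_\tau(V)$, so $x_0^{-1}\operatorname{int}_\tau(F_1)$ is a non-empty $\tau$-open subset of $\operatorname{cl}_\tau(V)$. On the other hand, the $\pi$-base hypothesis forces $\operatorname{int}_\tau(V)$ to be $\sigma$-dense, hence $\tau$-dense, in $V$, so $\operatorname{cl}_\tau(\operatorname{int}_\tau(V))=\operatorname{cl}_\tau(V)$, i.e. $\operatorname{int}_\tau(V)$ is $\tau$-dense in $\operatorname{cl}_\tau(V)$. A dense subset meets every non-empty relatively open set, so there is $p\in x_0^{-1}\operatorname{int}_\tau(F_1)\cap\operatorname{int}_\tau(V)$; then $x_0p\in\operatorname{int}_\tau(F_1)\subseteq F$ and $x_0p\in x_0\operatorname{int}_\tau(V)\subseteq x_0V\subseteq W$, so $x_0p\in W\cap F$. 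Since $\mathcal B$ is a base of $\mathcal F$, this gives $x_0\in\operatorname{cl}_\sigma(M)$ for every $M\in\mathcal F$, i.e. $x_0$ is a limit point of $\mathcal F$ in $(G,\sigma)$, completing the proof.

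The step I expect to be the real obstacle is the one just described — turning the $\tau$-limit point into a $\sigma$-limit point. The difficulty is that $\sigma$ is genuinely finer, so a $\sigma$-neighbourhood of $x_0$ carries no obvious $\tau$-information; it is the inclusion $F_1\subseteq x_0\operatorname{cl}_\tau(V)$, squeezed out of the Cauchy condition together with the fact that $x_0$ adheres to every member of $\mathcal F$, that pins the relevant member of $\mathcal F$ inside a translate of a $\tau$-closed set on which $\tau$-interiors are dense. The remaining ingredients — non-emptiness and $\tau$-density of $\tau$-interiors of $\sigma$-open sets, and that $\mathcal F_\tau$ is a Cauchy filter with a $\tau$-open base refining $\mathcal F$ — follow routinely from the $\pi$-base hypothesis and the inclusion $\tau\subseteq\sigma$.
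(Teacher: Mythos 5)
Your proof is correct, and its first stage is the same as the paper's: lower the given Cauchy filter to a filter with a $\tau$-open base by taking $\tau$-interiors of base members (non-empty thanks to the $\pi$-base hypothesis), observe that this finer filter is still Cauchy in the coarser group $(G,\tau)$, and use Cauchy complettness of $(G,\tau)$ to produce a $\tau$-limit point $x_0$. The lifting stage, however, is genuinely different. The paper fixes a symmetric $\sigma$-neighbourhood $U$ of the unit, takes $F^*$ with $yU\cap Uy\in\mathcal F$ for all $y\in F^*$, passes from $x_0\in\operatorname{cl}_\tau(yU\cap Uy)$ to $F^*\subseteq\operatorname{cl}_\tau(x_0U\cap Ux_0)$, and then intersects a $\tau$-open member $F\subseteq F^*$ of the lowered filter with the basic $\sigma$-neighbourhood $x_0U\cap Ux_0$; the $\pi$-base hypothesis enters only once. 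You argue one-sidedly --- $yV\in\mathcal F$ gives $F_1\subseteq x_0\operatorname{cl}_\tau(V)$ --- and then invoke the $\pi$-base hypothesis a second time, to see that $\operatorname{int}_\tau(V)$ is $\tau$-dense in $\operatorname{cl}_\tau(V)$, so the non-empty $\tau$-open set $x_0^{-1}\operatorname{int}_\tau(F_1)$ meets $\operatorname{int}_\tau(V)$ and yields a point of $F\cap x_0V\subseteq F\cap W$. This buys two things. First, it bypasses the delicate implication from $x_0\in\operatorname{cl}_\tau(yU\cap Uy)$ to $y\in\operatorname{cl}_\tau(x_0U\cap Ux_0)$, which in a non-commutative group directly yields only $y\in\operatorname{cl}_\tau(x_0U)\cap\operatorname{cl}_\tau(Ux_0)$ and thus needs exactly the kind of supplementary density argument you supply. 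Second, by choosing $F_1$ inside an arbitrary base member $F$, you make explicit that $x_0$ adheres to every member of the original filter, i.e., is its limit point in $(G,\sigma)$, a point the paper's closing sentence treats more briefly. One small slip: to see that $\left\{\operatorname{int}_\tau(F)\colon F\in\mathcal B\right\}$ is a filter base you write $F_1\cap F_2\in\mathcal B$, which an arbitrary base need not satisfy; instead choose $F_3\in\mathcal B$ with $F_3\subseteq F_1\cap F_2$ and use $\operatorname{int}_\tau(F_3)\subseteq\operatorname{int}_\tau(F_1)\cap\operatorname{int}_\tau(F_2)$ --- the rest of your argument is unaffected.
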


\begin{proof}
Let $\mathcal F'$ be a Cauchy filter on the group $(G,\sigma)$ with a base
$\mathcal B'\subset\sigma$.
Put $\mathcal B=\{\operatorname{int}_{(G,\tau)}(F')\colon F'\in\mathcal B'\}$.
Since the space $(G,\sigma)$ has a $\pi$-base which consists of open subsets of the space $(G,\tau)$,
a set $\operatorname{int}_{(G,\tau)}(F')$ is non-empty for any set $F'\in\mathcal B'$.
Therefore $\mathcal B\subset\tau$ is a base of a filter $\mathcal F$ on $G$.
Since $\operatorname{int}_{(G,\tau)}(F')\subset F'$ for each member $F'$ of the base $\mathcal B'$
of the filter $\mathcal F$,  we have that $\mathcal F'\subset\mathcal F$. Then the conditions
$\tau\subset\sigma$, $\mathcal F'\subset\mathcal F$ and $\mathcal F'$ is a Cauchy filter on the group
$(G,\sigma)$ imply that  $\mathcal F$ is a Cauchy filter on the group $(G,\tau)$ too.
Since the base $\mathcal B$ of the Cauchy filter $\mathcal F$ consists of open subsets
of the space $(G,\tau)$ and the group $(G,\tau)$ is Cauchy completty, we conclude that
there is a limit point $x_0$ of the filter $\mathcal F$ in the space $(G,\tau)$.
Let $U=U^{-1}$ be an arbitrary symmetric neighbourhood of the unit of the group
$(G,\sigma)$. Since $\mathcal F'\subset\mathcal F$ and
$\mathcal F'$ is a Cauchy filter on the group $(G,\sigma)$ we get that
there exists an element $F^*\in\mathcal F$ such that $yU\cap Uy\in\mathcal F'\subset\mathcal F$
for each point $y\in F^*$. Fix an arbitrary  $y\in F^*$.
Since $x_0$
is a limit point of the filter $\mathcal F$ in the space $(G,\tau)$ we have that
$x_0\in\operatorname{cl}_{(G,\tau)}(yU\cap Uy)$. Then
$y\in\operatorname{cl}_{(G,\tau)}(x_0U\cap Ux_0)$, hence
$F^*\subseteq\operatorname{cl}_{(G,\tau)}(x_0U\cap Ux_0)$. Since $\mathcal B$ is a base of the filter
$\mathcal F$ we get that there exists a subset $F\in\mathcal B\subset\tau$ such that $F\subset F^*$. Then we have that
$F\cap x_0U\cap Ux_0\ne\varnothing$. Therefore we see that an arbitrary open neighbourhood $U(x_0)\in\sigma$
of the point $x_0$ intersects an element of the filter $\mathcal F$ and  therefore
the neighbourhood $U(x_0)$ intersects an element of the filter $\mathcal F'$, too. Hence we get that $x_0$ is a limit point of the
filter $\mathcal F'$.
\end{proof}

We need the following facts from the paper \cite{RavskyPre1} for the
construction of a non-compact quasitopological group $G$ which is an $H$-closed topological space.

Two topologies $\tau$ and $\sigma$ on the set $X$ we call {\it cowide} provided for any
nonempty sets $U\in\tau$ and $V\in\sigma$ the intersection $U\cap V$ is nonempty too. If a
topology $\sigma$ is cowide to itself then we  call the topology $\sigma$ {\it wide}.

For two topologies  $\tau$ and $\sigma$ on the set $X$ by $\tau\vee\sigma$ we denote
the \emph{supremum} of the topologies $\tau$ and $\sigma$, i.e., $\tau\vee\sigma$ has a base $\{U\cap V\colon U\in\tau,
V\in\sigma\}$.

\begin{lemma}[{\cite[Proposition 18]{RavskyPre1}}]\label{SupHclosed}
Let $\tau$ and $\sigma$ be cowide topologies on the set $X$ such that
the space $(X,\tau)$ is $H$-closed and the topology $\sigma$ is wide. Then the
space $(X,\tau\vee\sigma)$ is $H$-closed too.
\end{lemma}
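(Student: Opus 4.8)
The plan is to verify the characterization of $H$-closedness given by condition (2) (equivalently (3)) of Theorem~\ref{HClSp} for the space $(X,\tau\vee\sigma)$, exploiting the cowideness hypothesis to transfer a finite-intersection family from the finer topology down to the coarser $H$-closed topology $\tau$. First I would take an arbitrary family $\{W_s\colon s\in S\}$ of nonempty $(\tau\vee\sigma)$-open sets with the finite intersection property, and reduce to the case where each $W_s$ is a basic open set, i.e. $W_s=U_s\cap V_s$ with $U_s\in\tau$ and $V_s\in\sigma$. The key observation is that, by cowideness of $\tau$ and $\sigma$, each $W_s$ is nonempty, and more importantly the family $\{U_s\colon s\in S\}$ of the $\tau$-coordinates should itself have the finite intersection property: given $s_1,\dots,s_n$, the set $W_{s_1}\cap\dots\cap W_{s_n}$ is nonempty and is contained in $(U_{s_1}\cap\dots\cap U_{s_n})\cap(V_{s_1}\cap\dots\cap V_{s_n})$, so in particular $U_{s_1}\cap\dots\cap U_{s_n}\ne\varnothing$.

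Next, since $(X,\tau)$ is $H$-closed, condition (2) of Theorem~\ref{HClSp} applied to $\{U_s\colon s\in S\}$ yields a point $x\in\bigcap_{s\in S}\operatorname{cl}_{(X,\tau)}(U_s)$. I would then argue that this same $x$ lies in $\bigcap_{s\in S}\operatorname{cl}_{(X,\tau\vee\sigma)}(W_s)$, which by Theorem~\ref{HClSp}(2) gives that $(X,\tau\vee\sigma)$ is $H$-closed. To check membership, fix $s$ and a basic $(\tau\vee\sigma)$-neighbourhood $O\cap O'$ of $x$, with $O\in\tau$ and $O'\in\sigma$; I must show $(O\cap O')\cap W_s\ne\varnothing$. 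Since $x\in\operatorname{cl}_{(X,\tau)}(U_s)$, the $\tau$-open set $O\cap U_s$ is nonempty. This $O\cap U_s$ is $\tau$-open, and $O'\cap V_s$ is $\sigma$-open and nonempty (nonempty because $V_s\supseteq W_s\ne\varnothing$, and $O'$ is a neighbourhood of... — here one needs care, see below). Then cowideness of $\tau$ and $\sigma$ forces $(O\cap U_s)\cap(O'\cap V_s)\ne\varnothing$, and this intersection is contained in $(O\cap O')\cap W_s$, as desired.

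The main obstacle is the last step: I cannot simply assert that $O'\cap V_s$ is nonempty, because $x$ need not be a $\sigma$-closure point of $V_s$ — the $\tau$-coordinate and the $\sigma$-coordinate of the $W_s$ were handled asymmetrically. This is precisely where \emph{wideness} of $\sigma$ must enter, rather than mere cowideness: wideness says any two nonempty $\sigma$-open sets meet. So the correct argument is that $O'$ is a nonempty $\sigma$-open set (it is a neighbourhood of $x$) and $V_s$ is a nonempty $\sigma$-open set, hence by wideness of $\sigma$ their intersection $O'\cap V_s$ is nonempty; and then $O\cap U_s$ (nonempty $\tau$-open, by the $\tau$-closure property of $x$) meets $O'\cap V_s$ (nonempty $\sigma$-open) by cowideness of $\tau$ and $\sigma$. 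Thus $(O\cap U_s)\cap(O'\cap V_s)\subseteq (O\cap O')\cap(U_s\cap V_s)=(O\cap O')\cap W_s$ is nonempty, completing the verification that $x\in\operatorname{cl}_{(X,\tau\vee\sigma)}(W_s)$ for every $s$, and hence that $(X,\tau\vee\sigma)$ is $H$-closed. I would also double-check at the outset that $\tau\vee\sigma$ is still Hausdorff — it is finer than $\tau$, which is Hausdorff, so this is immediate and needs only a one-line remark.
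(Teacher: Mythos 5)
The paper does not actually prove this lemma --- it quotes it from \cite{RavskyPre1} --- so your argument has to stand on its own, and as written it has one genuine gap: the very first step, where you ``reduce to the case where each $W_s$ is a basic open set'' while verifying condition (2) of Theorem~\ref{HClSp}. Condition (2) quantifies over families of \emph{arbitrary} open sets and cannot be checked on a base: replacing each $W_s$ by a nonempty basic subset can destroy the finite intersection property, and an FIP family of open sets need not admit any FIP refinement by basic sets. In fact the reduction principle you invoke is false in general: in an infinite discrete space with the base of all singletons, any subfamily of the base with the finite intersection property reduces to a single singleton, so the ``basic'' version of condition (2) holds trivially, yet the space is not $H$-closed. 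So your argument, as organized, does not establish condition (2) for $(X,\tau\vee\sigma)$.

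The substantive part of your proof is, however, exactly the right idea: the wideness/cowideness computation shows that for a nonempty basic set $U\cap V$ (with $U\in\tau$, $V\in\sigma$) one has $\operatorname{cl}_{(X,\tau)}(U)\subseteq\operatorname{cl}_{(X,\tau\vee\sigma)}(U\cap V)$, since for a basic neighbourhood $O\cap O'$ of a point of $\operatorname{cl}_{(X,\tau)}(U)$ the set $O\cap U$ is nonempty $\tau$-open, $O'\cap V$ is nonempty $\sigma$-open by wideness, and these meet by cowideness. To repair the proof, route this through condition (4) of Theorem~\ref{HClSp} instead of condition (2), because for \emph{covers} the passage to basic sets is legitimate: any open cover refines to a cover by nonempty basic sets, and closures only increase when you return to the original cover. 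Given a cover of $X$ by nonempty basic sets $U_s\cap V_s$, the family $\{U_s\colon s\in S\}$ is a $\tau$-open cover, so $H$-closedness of $(X,\tau)$ gives $s_1,\dots,s_n$ with $\operatorname{cl}_{(X,\tau)}(U_{s_1})\cup\dots\cup\operatorname{cl}_{(X,\tau)}(U_{s_n})=X$, and your containment then yields $\operatorname{cl}_{(X,\tau\vee\sigma)}(U_{s_1}\cap V_{s_1})\cup\dots\cup\operatorname{cl}_{(X,\tau\vee\sigma)}(U_{s_n}\cap V_{s_n})=X$, which is condition (4); this is the same cover-based pattern the paper uses in Example~\ref{ex-2}. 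Your remark that $\tau\vee\sigma$ is Hausdorff because it refines $\tau$ is fine.
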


\begin{lemma}\label{lemma++}
If $(G,\tau)$ and $(G,\theta)$ are quasitopological groups, then $(G,\tau\vee\theta)$ is quasitopological group.
\end{lemma}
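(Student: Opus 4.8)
The plan is to verify directly the three continuity requirements in the definition of a quasitopological group for the topology $\tau\vee\theta$, exploiting the two elementary facts that, by definition, the family $\{U\cap V\colon U\in\tau,\ V\in\theta\}$ is a base for $\tau\vee\theta$, and that for any self-map $f$ of $G$ and any sets $U,V\subseteq G$ one has $f^{-1}(U\cap V)=f^{-1}(U)\cap f^{-1}(V)$.

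First I would treat the translations. Fix $a\in G$ and let $L_a\colon G\to G$, $x\mapsto ax$. For a basic open set $U\cap V$ of $\tau\vee\theta$ (with $U\in\tau$, $V\in\theta$) we have $L_a^{-1}(U\cap V)=L_a^{-1}(U)\cap L_a^{-1}(V)$; since $(G,\tau)$ and $(G,\theta)$ are semitopological groups, $L_a^{-1}(U)\in\tau$ and $L_a^{-1}(V)\in\theta$, hence $L_a^{-1}(U\cap V)\in\tau\vee\theta$. Thus $L_a$ is continuous with respect to $\tau\vee\theta$; the same computation with the right translation $R_a\colon x\mapsto xa$ shows that $R_a$ is $(\tau\vee\theta)$-continuous. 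Consequently the group operation on $(G,\tau\vee\theta)$ is separately continuous.

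It remains to check the inversion $\operatorname{inv}\colon G\to G$, $x\mapsto x^{-1}$. Again, for a basic open set $U\cap V$ of $\tau\vee\theta$ we have $\operatorname{inv}^{-1}(U\cap V)=\operatorname{inv}^{-1}(U)\cap\operatorname{inv}^{-1}(V)$; now, because $(G,\tau)$ and $(G,\theta)$ are quasitopological groups (not merely semitopological), $\operatorname{inv}^{-1}(U)\in\tau$ and $\operatorname{inv}^{-1}(V)\in\theta$, so $\operatorname{inv}^{-1}(U\cap V)\in\tau\vee\theta$ and $\operatorname{inv}$ is $(\tau\vee\theta)$-continuous. Therefore $(G,\tau\vee\theta)$ is a quasitopological group. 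I do not anticipate any genuine difficulty: the whole argument rests on the fact that preimages commute with intersections and that a base for the supremum topology is given by pairwise intersections, and the only point requiring a little care is to use exactly the hypotheses available — separate continuity of multiplication and continuity of inversion in each of $(G,\tau)$ and $(G,\theta)$ — and nothing stronger. (Equivalently, one may phrase the argument through the universal property of $\tau\vee\theta$: a map into $(G,\tau\vee\theta)$ is continuous if and only if its composites with the continuous identity maps $(G,\tau\vee\theta)\to(G,\tau)$ and $(G,\tau\vee\theta)\to(G,\theta)$ are continuous.)
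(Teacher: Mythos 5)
Your proof is correct and takes essentially the same route as the paper: both arguments reduce to the basic open sets $U\cap V$ (with $U\in\tau$, $V\in\theta$) of $\tau\vee\theta$ and use separate continuity of multiplication and continuity of inversion in each of $(G,\tau)$ and $(G,\theta)$, intersecting the resulting open sets. Your preimage formulation of the inversion step is even a bit cleaner than the paper's remark about symmetric neighbourhoods of the unit, but the substance of the verification is the same.
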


\begin{proof}
Fix arbitrary point $x,y\in(G,\tau\vee\theta)$. Let $U^{\tau\vee\theta}(xy)$ be an arbitrary open neighbourhood of the point $xy$ in the space $(G,\tau\vee\theta)$. Then $U^{\tau\vee\theta}(xy)\supseteq U^{\tau}(xy)\cap U^{\theta}(xy)$ for some open neighbourhoods $U^{\tau}(xy)$ and $U^{\theta}(xy)$ of the point $xy$ in the spaces $(G,\tau)$ and $(G,\theta)$, respectively. Then the separate continuity of the group operations in $(G,\tau)$ and $(G,\theta)$ implies that  there exist open neighbourhoods $V^{\tau}(x)$ and $V^{\tau}(y)$ of the points $x$ and $y$, respectively, in the space $(G,\tau)$, and open neighbourhoods $V^{\theta}(x)$ and $V^{\theta}(y)$ of the points $x$ and $y$ in the space $(G,\theta)$, such that $V^{\tau}(x)y\cup xV^{\tau}(y)\subseteq U^{\tau}(xy)$ and $V^{\theta}(x)y\cup xV^{\theta}(y)\subseteq U^{\theta}(xy)$. We put $V^{\tau\vee\theta}(x)= V^{\tau}(x)\cap V^{\theta}(x)$ and $V^{\tau\vee\theta}(y)= V^{\tau}(y)\cap V^{\theta}(y)$.
Then we get that
\begin{equation*}
V^{\tau\vee\theta}(x)y\subseteq (V^{\tau}(x)\cap V^{\theta}(x))y\subseteq V^{\tau}(x)y\cap V^{\theta}(x)y \subseteq U^{\tau}(xy)\cap U^{\theta}(xy)= U^{\tau\vee\theta}(xy)
\end{equation*}
and
\begin{equation*}
xV^{\tau\vee\theta}(y)\subseteq x(V^{\tau}(y)\cap V^{\theta}(y)) \subseteq x V^{\tau}(y)\cap xV^{\theta}(y) \subseteq U^{\tau}(xy)\cap U^{\theta}(xy)= U^{\tau\vee\theta}(xy).
\end{equation*}
Also for any open neighbourhoods $U^{\tau}(e)$ and $U^{\theta}(e)$ of the unit $e$ of the group $G$ in the spaces $(G,\tau)$ and $(G,\theta)$, respectively, we have that $\left(U^{\tau\vee\theta}(e)\right)^{-1}=U^{\tau\vee\theta}(e)$, where $U^{\tau\vee\theta}(e)=U^{\tau}(e)\cap U^{\theta}(e)$. Hence $(G,\tau\vee\theta)$ is a quasitopological group.
\end{proof}

Further on we present two examples of non-compact quasitopological groups which are $H$-closed spaces with different ideas of constructions.

\begin{example}\label{BardEx2+} Let $\mathbb{T}=\left\{z\in\mathbb{C}\colon |z|=1\right\}$ be the unit circle with the operation of usual multiplication of complex numbers and $\mathbb{Q}$ be the additive group of rational numbers. Let $s\colon \mathbb{T}\to\mathbb{Q}$ be any surjective homomorphism.
Such a homomorphism can be build as follows.
Let $\alpha\in\Bbb R$ be an arbitrary irrational number.
Consider the injective isomorphism: $\overline\alpha\colon\Bbb Q\to\Bbb R$, $\overline\alpha:x\mapsto\alpha x$
and the standard homomorphism $q\colon\Bbb R\to\Bbb T$, $q:x\mapsto e^{2\pi xi}$.
Then $q\overline\alpha$ is an isomorphic embedding of a group $\Bbb Q$ into the group $\Bbb T$.
Put $H=q\overline\alpha(\Bbb Q)$.
Since $H$ is a divisible subgroup of the Abelian group $\Bbb T$, by~\cite[$\S$23]{Kur} $H$ is
a direct summand of the group $\Bbb T$. So the isomorphism $(q\overline\alpha)^{-1}:H\to\Bbb Q$ can be extended to a
homomorphism $s:\Bbb T\to\Bbb Q$.

For each positive integer $n$ we put $U_n=\left\{x\in\mathbb{T}\colon |s(x)|>n\right\}$.
It is easy to check that the family $\left\{\{x\}\cup (xU_n)\colon x\in\mathbb{T}, n\in\mathbb{N}\right\}$ is a base of a
wide topology $\sigma$ at the point $x$ of the group $\mathbb{T}$.  Let $\tau$ be the usual topology on $\mathbb{T}$. Then the topological space $(\mathbb{T},\tau)$ is compact
and therefore $H$-closed. Since each subset $U_n$ is dense in $(\mathbb{T},\tau)$, topologies $\tau$ and $\sigma$ are cowide. It is obvious that the space $(\mathbb{T},\tau\vee\sigma)$ is not compact.
Then Lemma~\ref{SupHclosed} implies that $(\mathbb{T},\tau\vee\sigma)$ is an $H$-closed space, and it is easy to check that $(\mathbb{T},\sigma)$ is a quasitopological group. Then  $(\mathbb{T},\tau\vee\sigma)$ is a quasitopological group
too.
\end{example}

\begin{example}\label{ex-2}
Let the group $(\mathbb{T},\tau)$ be as in Example~\ref{BardEx2+} and $\sigma$ be the topology on $\mathbb{T}$ generated by the base which consists of co-countable subsets of $\mathbb{T}$. It easy to check that  $(G,\tau)$ and $(G,\sigma)$ are quasitopological groups. Lemma~\ref{lemma++} implies that $(\mathbb{T},\sigma\vee\tau)$ is a quasitopological group. Next we show that $(\mathbb{T},\sigma\vee\tau)$ is an $H$-closed topological space. We remark that $\left\{A\cap B\ \colon A\in \tau, B\in \sigma\right\}$ is a base of the topology $\sigma\vee\tau$ on $\mathbb{T}$.
Suppose that $\left\{U_{\alpha}\colon \alpha\in\mathscr{I}\right\}$ is an open cover of the topological space $(\mathbb{T},\sigma\vee\tau)$ which consists of basic open subsets. Then for every $\alpha\in\mathscr{I}$ we have that $U_{\alpha}= A_{\alpha}\cap B_{\alpha}$, where $A_{\alpha}$ and $B_{\alpha}$ are elements of the topologies $\tau$ and $\sigma$, respectively. Now, the compactness of the space $(G,\tau)$ implies that there exists a finite subfamily $\left\{A_{\alpha_{1}},.., A_{\alpha_{n}}\right\}$ in $\left\{A_{\alpha}\colon \alpha\in\mathscr{I}\right\}$, such that $\bigcup^{n}_{i=1}A_{\alpha_{i}}= G$. Then the definitions of topologies $\tau$ and $\sigma$ imply that $U_{\alpha_{i}}$ is a dense subset of $A_{\alpha_{i}}$ in the topological space $(\mathbb{T},\sigma\vee\tau)$ for any $i=1,\ldots,n$. This implies that $\bigcup^{n}_{i=1}\operatorname{cl}_{(\mathbb{T},\sigma\vee\tau)}\left(U_{\alpha_{i}}\right)= G$ and hence by Theorem~\ref{HClSp} $(\mathbb{T},\sigma\vee\tau)$ is an $H$-closed topological space.
\end{example}

%%%%%%%%%%%%%%%%%%%%%%%%%%%%%%%%%%%%%%%%%%%%%%%%%%%%%
\section*{Acknowledgements}

We acknowledge the referee for his (her) important  several comments and suggestions.
%%%%%%%%%%%%%%%%%%%%%%%%%%%%%%%%%%%%%%%%%%%%%%%%%%%%%%%%%%%%

\end{document}